\newtheorem{theorem}{Theorem}
\newtheorem{obs}[theorem]{Observation}
\begin{document}

\title{A Bipartite Graph That Is Not the $\gamma$-Graph of a Bipartite Graph}
\author{Christopher M. van Bommel\footnote{Supported by a Pacific Institute for the Mathematical Sciences Post-doctoral Fellowship.}\\Department of Mathematics\\University of Manitoba\\Winnipeg, MB, Canada\\\texttt{Christopher.VanBommel@umanitoba.ca}}

\maketitle

\begin{abstract}
For a graph $G = (V, E)$, the $\gamma$-graph of $G$ is the graph whose vertex set is the collection of minimum dominating sets, or $\gamma$-sets of $G$, and two $\gamma$-sets are adjacent if they differ by a single vertex and the two different vertices are adjacent in $G$.  An open question in $\gamma$-graphs is whether every bipartite graph is the $\gamma$-graph of some bipartite graph.  We answer this question in the negative by demonstrating that $K_{2, 3}$ is not the $\gamma$-graph of any bipartite graph.
\end{abstract}

\section{Introduction}

Reconfiguration is a topic in graph theory concerned with how two solutions to a problem on a particular graph are related.  We consider a sequence of steps that allow us to transform one solution to another, with each intermediate step also producing a solution.  We can represent this information in a reconfiguration graph, where the vertices represent the solutions and the edges represent that the solutions represented by their endpoints are separated by a single step.  Questions we can ask about reconguration graphs include structural properties (connectedness, Hamiltonicity, diameter, planarity), realisability (which graphs are the reconfiguration graphs of some graph), and algorithmic properties (how to move between solutions).  For a recent survey on the topic of reconfiguration, we refer the reader to~\cite{MN20}.

In this paper, we consider the reconfiguration of minimum cardinality dominating sets, or $\gamma$-sets.  Let $G$ be a graph and let $v$ be a vertex of $G$.  The \emph{open neighbourhood} of a vertex $v$, denoted $N(v)$, is the set of vertices adjacent to $v$, and the \emph{closed neighbourhood} of a vertex $v$, denoted $N[v]$, is $N(v) \cup \{v\}$.  For a subset of vertices $S$, we say $N(S) = \cup_{x \in S} N(x)$ and $N[S] = S \cup N(S)$.  A subset of vertices $D$ is a \emph{dominating set} of $G$ if $N[D] = V(G)$, that is, every vertex not in $D$ is adjacent to a vertex in $D$.  The \emph{domination number} of a graph $G$, denoted $\gamma(G)$, is the minimum cardinality of a dominating set of $G$.

For a dominating set $D$ of $G$ and a vertex $x \in D$, the set of \emph{private neighbours} of $x$, denoted $pn(x, D)$ is $N[x] - N[D - x]$, that is, the set of vertices in the closed neighbourhood of $x$ and not in the closed neighbourhood of any other vertex in $D$.  If $x \in pn(x, D)$, then $x$ is a \emph{$D$-self private neighbour} and if $y \neq x$ and $y \in pn(x, D)$, then $y$ is a \emph{$D$-external private neighbour} of $x$.  If $D$ is a $\gamma$-set, then every vertex in $D$ has a private neighbour.

Now, we form a reconfiguration graph, called a $\gamma$-graph, whose vertex set is the set of $\gamma$-sets.  Two possible steps to transform $\gamma$-sets have been considered.  In the first, which we denote $\mathcal{J}(G, \gamma)$, we say two $\gamma$-sets $D_1$ and $D_2$ are adjacent if and only if there exist vertices $x \in D_1$ and $y \in D_2$ such that $D_1 - \{x\} = D_2 - \{y\}$.  The $\gamma$-graph $\mathcal{J}(G, \gamma)$ is known as the \emph{$\gamma$-graph in the single vertex replacement adjacency model} or the \emph{jump $\gamma$-graph}.  In the second, which we denote $\mathcal{S}(G, \gamma)$, we say two $\gamma$-sets $D_1$ and $D_2$ are adjacent if and only if there exist {\bf adjacent} vertices $x \in D_1$ and $y \in D_2$ such that $D_1 - \{x\} = D_2 - \{y\}$.  The $\gamma$-graph $\mathcal{S}(G, \gamma)$ is known as the \emph{$\gamma$-graph in the slide adjacency model} or the \emph{slide $\gamma$-graph}.

Jump $\gamma$-graphs were introduced by Sridharan and Subramanaian~\cite{SS08} with the notation $\gamma \cdot G$.  Slide $\gamma$-graphs were introduced by Fricke, Hedetniemi, Hedetniemi, and Hutson~\cite{FHHH11} with the notation $G(\gamma)$.  We will mainly consider slide $\gamma$-graphs in this work.

Fricke et al.~\cite{FHHH11} demonstrated that every tree is the slide $\gamma$-graph of some graph, the slide $\gamma$-graphs of trees are connected and bipartite, and the slide $\gamma$-graphs of triangle-free graphs are triangle-free.  Moreover, they computed the slide $\gamma$-graphs for several classes of graphs, including complete graphs, complete bipartite graphs, paths, and cycles.  Connelly, Hedetniemi, and Hutson~\cite{CHH10} resolved the question of realisability of slide $\gamma$-graphs with the following.

\begin{theorem} \cite{CHH10}
Every graph is realisable as a $\gamma$-graph $\mathcal{S}(G, \gamma)$ of infinitely many graphs $G$.
\end{theorem}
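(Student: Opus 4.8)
\medskip

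\noindent\textbf{Proof proposal.} The plan is to exhibit, for an arbitrary finite graph $H$ on vertices $w_1,\dots,w_n$, an explicit $G$ whose $\gamma$-sets all share a common ``core'' and are in bijection with $V(H)$, with two of them slide-adjacent exactly when the corresponding vertices of $H$ are adjacent. Concretely I would fix an integer $k\ge 1$, take a \emph{core} $C=\{c_1,\dots,c_k\}$ and attach two pendant leaves to each $c_i$, take an independent set $U=\{u_1,\dots,u_t\}$ with $t\ge 2$, place a copy of $H$ on a set $W=\{w_1,\dots,w_n\}$, join every $c_i$ to every $w_j$, join every $w_j$ to every $u_\ell$, and add no edges between $C$ and $U$.

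First I would argue that every minimum dominating set of $G$ contains all of $C$: to dominate the two leaves hanging off $c_i$, a dominating set must contain $c_i$ or both of those leaves, and in the latter case replacing the two leaves by $c_i$ produces a strictly smaller dominating set. Since $N[C]=C\cup(\text{leaves})\cup W$, the vertices of $U$ are precisely those not dominated by $C$; as $U\neq\emptyset$ this gives $\gamma(G)=k+1$ and shows every $\gamma$-set has the form $C\cup\{z\}$ where $z$ alone dominates $U$. A leaf or a core vertex is not adjacent to $U$, and a single $u_\ell$ fails to dominate the other vertices of $U$ because $U$ is independent with $t\ge 2$; hence $z\in W$, and the $\gamma$-sets are exactly $D_i:=C\cup\{w_i\}$ for $i=1,\dots,n$.

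It then remains to read off $\mathcal{S}(G,\gamma)$. For $i\neq j$ the sets $D_i$ and $D_j$ differ only in the single vertex $w_i$ versus $w_j$, so $D_i-\{w_i\}=C=D_j-\{w_j\}$, and $D_i,D_j$ are slide-adjacent if and only if $w_i$ and $w_j$ are adjacent in $G$, i.e.\ $w_iw_j\in E(H)$; thus $\mathcal{S}(G,\gamma)\cong H$. Letting $k$ range over all integers $k\ge 1$ (or, alternatively, varying $t$) produces infinitely many pairwise non-isomorphic graphs $G$, since they differ in order, all realising $H$ as their slide $\gamma$-graph.

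I expect the main obstacle to be the verification that $G$ has \emph{no other} $\gamma$-sets: this is exactly what forces the two leaves onto each core vertex (making $C$ mandatory) and what forces $U$ to be independent with at least two vertices (so that no $u_\ell$ can be substituted for a $w_i$). Omitting either feature introduces spurious minimum dominating sets and breaks the isomorphism, so the construction must be tuned so that the unique ``free'' slot of a $\gamma$-set can be occupied precisely by the vertices of the planted copy of $H$.
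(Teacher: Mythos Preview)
The paper itself does not supply a proof of this theorem; it merely quotes the result from Connelly, Hedetniemi, and Hutson~\cite{CHH10}. Consequently there is no ``paper's own proof'' to compare against here.

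That said, your argument is correct and complete. The two pendant leaves on each $c_i$ force $C$ into every minimum dominating set (if some $c_i$ were absent, both of its leaves would have to be present, and swapping them for $c_i$ strictly decreases the size while preserving domination), and the independent set $U$ of size at least two guarantees that the single remaining slot of a $\gamma$-set can be filled only by a vertex of $W$. Thus the $\gamma$-sets are exactly $C\cup\{w_i\}$, any two of which differ in one vertex and are slide-adjacent precisely when $w_iw_j\in E(H)$, so $\mathcal{S}(G,\gamma)\cong H$. Varying $k$ yields infinitely many realisations. This is in the same spirit as the construction of Connelly et al., which likewise engineers a forced core so that the only freedom in a $\gamma$-set is a single vertex ranging over a planted copy of $H$; your use of the auxiliary independent set $U$ to kill off non-$W$ candidates is a clean way to achieve exactly that.

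The one caveat worth recording explicitly is the degenerate case $|V(H)|=0$: with no $w_j$ present, the vertices of $U$ become isolated and the unique $\gamma$-set is $C\cup U$, giving $\mathcal{S}(G,\gamma)\cong K_1$ rather than the null graph. If you intend ``every graph'' to include the null graph you would need a separate (trivial) construction; otherwise the statement should be read for graphs with at least one vertex.
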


Connelly et al.~\cite{CHH10} also considered the connectedness of slide $\gamma$-graphs; demonstrating all $\gamma$-graphs of graphs of order at most 5 are connected and characterizing the graphs of order 6 with disconnected $\gamma$-graphs.

Edwards, MacGillivray, and Nasserasr~\cite{EMN18} determined the following upper bounds on the order, diameter, and maximum degree of both jump and slide $\gamma$-graphs of trees, which answers open questions discussed by Fricke et al.~\cite{FHHH11}. A \emph{support vertex} is the unique neighbour of a vertex of degree one.

\begin{theorem} \cite{EMN18}
If $T$ is a tree of order $n$ having $s$ support vertices, then
\begin{enumerate}[(i)]
\item $\Delta(\mathcal{S}(T, y)) \le n - \gamma(T)$ and $\Delta(\mathcal{J}(T, \gamma)) \le n - \gamma(T)$,
\item $\mathrm{diam}(\mathcal{S}(T, \gamma)) \le 2 (2 \gamma(T) - s)$ and $\mathrm{diam}(\mathcal{J}(T, \gamma) \le 2 \gamma(T)$,
\item $|V(\mathcal{S}(T, \gamma))| = |V(\mathcal{J}(T, \gamma))| \le ((1 + \sqrt{13}) / 2)^{\gamma(T)}$.
\end{enumerate}
\end{theorem}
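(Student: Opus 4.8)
The three parts are essentially independent, and I would prove them in turn, using throughout that $\mathcal{S}(T,\gamma)$ is a spanning subgraph of $\mathcal{J}(T,\gamma)$: the two graphs share their vertex set (all $\gamma$-sets of $T$) and every slide edge is a jump edge. Consequently the order bound in (iii) is really a single assertion, and any upper bound on $\Delta(\mathcal{J}(T,\gamma))$ transfers verbatim to $\mathcal{S}(T,\gamma)$. I would also invoke the fact recalled in the introduction that slide $\gamma$-graphs of trees are connected, so that both graphs are connected and the diameters in (ii) are finite.

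\emph{Part (i).} Fix a $\gamma$-set $D$. Every neighbour of $D$ in $\mathcal{J}(T,\gamma)$ has the form $(D \setminus \{x\}) \cup \{y\}$ with $x \in D$ and $y \in V(T) \setminus D$, and there are exactly $n - \gamma(T)$ choices for the entering vertex $y$; so it suffices to show that $y$ determines the neighbour, i.e.\ that no $y \notin D$ can replace two distinct vertices of $D$. The engine is the observation, valid in any graph, that $(D \setminus \{x\}) \cup \{y\}$ is dominating if and only if $pn(x,D) \subseteq N[y]$, together with the fact (quoted above) that every vertex of a $\gamma$-set has a nonempty private neighbourhood. If some $y \notin D$ satisfied $pn(x_1,D) \subseteq N[y]$ and $pn(x_2,D) \subseteq N[y]$ for distinct $x_1,x_2 \in D$, I would argue that $(D \setminus \{x_1,x_2\}) \cup \{y\}$ is still dominating: the only vertices it could fail to dominate are those whose only neighbours in $D$ are $x_1$ and $x_2$, and such a vertex, together with the adjacencies to $y$ forced by $pn(x_i,D) \subseteq N[y]$, would produce a cycle --- impossible in a tree. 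This contradicts $|(D \setminus \{x_1,x_2\}) \cup \{y\}| = \gamma(T) - 1$. Hence $\Delta(\mathcal{J}(T,\gamma)) \le n - \gamma(T)$, and the same bound holds for $\mathcal{S}(T,\gamma)$.

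\emph{Part (ii).} For the jump model I would show that any two $\gamma$-sets $D_1, D_2$ are joined by a walk of length at most $2|D_1 \setminus D_2| \le 2\gamma(T)$, by induction on $k = |D_1 \setminus D_2|$: it is enough to reach, in at most two jumps, a $\gamma$-set $D_1'$ with $|D_1' \setminus D_2| = k-1$, where the second jump absorbs the cases in which no single swap toward $D_2$ keeps the set dominating, and the tree structure is what guarantees such a short detour always exists. For the slide model the target $2(2\gamma(T) - s)$ is larger, since slides move tokens only along edges and thus constrain the walks more; I would obtain it by a more careful analysis of how the tokens of a $\gamma$-set can migrate through the tree, with the correction $-s$ reflecting the limited mobility of support vertices (a tree always has a $\gamma$-set containing all of its support vertices, so in particular $s \le \gamma(T)$). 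Making this migration argument precise is the more delicate half of (ii).

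\emph{Part (iii).} Since $\mathcal{S}(T,\gamma)$ and $\mathcal{J}(T,\gamma)$ have the same vertex set, the equality is immediate and it remains to bound the number $f(T)$ of $\gamma$-sets of a tree. Put $\alpha = (1+\sqrt{13})/2$, so that $\alpha^2 = \alpha + 3$; then $f(T) \le \alpha^{\gamma(T)}$ follows by induction from a recursion of the shape $f(T) \le f(T_1) + 3 f(T_2)$ with $\gamma(T_1) = \gamma(T) - 1$ and $\gamma(T_2) = \gamma(T) - 2$, together with small base cases. To obtain such a recursion I would root $T$, choose a deepest support vertex $v$ (so that all of its children are leaves), and partition the $\gamma$-sets of $T$ according to their behaviour at $v$ and its leaf children --- whether $v$ is chosen, whether a leaf below $v$ is chosen in its place, and how the parent of $v$ is dominated --- each class mapping injectively into the $\gamma$-sets of a strictly smaller tree whose domination number is $\gamma(T)-1$ or $\gamma(T)-2$. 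I expect this case analysis to be the main obstacle of the whole theorem: one must treat a support vertex carrying several leaves (in which case it lies in every $\gamma$-set) separately from one carrying a single leaf, and carefully track whether deleting the pendant structure already dominates the parent, so that none of the classes contributes a larger multiplier than $\alpha^2 = \alpha + 3$ permits.
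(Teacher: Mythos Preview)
The paper does not prove this theorem: it is quoted, with the citation \cite{EMN18}, as a result of Edwards, MacGillivray, and Nasserasr, and no argument (not even a sketch) appears in the present paper. Consequently there is no ``paper's own proof'' against which to compare your proposal.

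That said, a brief remark on the proposal itself. Your outline for (i) is essentially right and can be made rigorous in a tree; the only point to handle carefully is a vertex dominated by \emph{both} $x_1$ and $x_2$ and by no other element of $D$, which is where the acyclicity of $T$ is used. Your treatment of (ii) is really only a statement of intent: ``a more careful analysis of how the tokens of a $\gamma$-set can migrate through the tree'' is not yet an argument, and obtaining the precise constant $2(2\gamma(T)-s)$ requires a concrete mechanism (in \cite{EMN18} this is done via a canonical $\gamma$-set and an explicit bound on the distance from any $\gamma$-set to it). For (iii) your identification of $\alpha=(1+\sqrt{13})/2$ as the root of $\alpha^2=\alpha+3$ is correct and matches the recursion used in \cite{EMN18}, but the partition of $\gamma$-sets at a deepest support vertex has more cases than you indicate, and checking that the multipliers never exceed what $\alpha^2=\alpha+3$ allows is where the work lies.
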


We see that the maximum degree and diameter of $\gamma$-graphs of trees are linear in the number of vertices.  Edwards et al.~\cite{EMN18} demonstrated that the bounds in (i) are sharp for an infinite family of trees, stated that no known tree has a $\gamma$-graph whose diamter exceeds half the bound in (ii), and showed that $|V(\mathcal{S}(T, \gamma)| > 2^{\gamma(T)}$ for infinitely many trees.  Lema\'{n}ska and \.{Z}yli\'{n}ski~\cite{LZ20} determined the following tight bounds on the diameter.  A support vertex is \emph{strong} if it is adjacent to at least two leaves, otherwise it is \emph{weak}.

\begin{theorem} \cite{LZ20}
If $T$ is a tree of order $n \ge 3$ having $s'$ weak support vertices and $s''$ strong support vertices, then $\mathrm{diam}(\mathcal{S}(T, \gamma)) \le \min\{2 (\gamma(T) - s'') - s', 2 (n - 1) / 3\}$ and $\mathrm{diam}(\mathcal{J}(T, \gamma)) \le \gamma(T) - s'$.
\end{theorem}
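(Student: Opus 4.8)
The plan is to isolate the structural facts about $\gamma$-sets of trees that govern the problem, deduce the jump bound by a ``no wasted move'' argument, bootstrap to the additive slide bound through a canonical family of $\gamma$-sets, and then attack the $2(n-1)/3$ bound by a separate, global counting argument. Three lemmas carry the structural load. First, \emph{every strong support vertex lies in every $\gamma$-set}: if $v$ is a strong support with leaves $\ell_1,\ell_2$ and $v\notin D$, then $\ell_1,\ell_2\in D$, and $(D\setminus\{\ell_1,\ell_2\})\cup\{v\}$ dominates while being smaller, contradicting minimality. Second, for a weak support $v$ with leaf $\ell$, \emph{every $\gamma$-set contains exactly one of $v$ and $\ell$}, and if it contains $\ell$ then exchanging $\ell$ for $v$ yields another $\gamma$-set that is adjacent to it in $\mathcal{S}(T,\gamma)$. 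Third, as a consequence $T$ has a $\gamma$-set containing all of its support vertices (a \emph{full} $\gamma$-set), reachable from any prescribed $\gamma$-set by performing one such exchange for each weak support that it mishandles. Together these pin down the vertices common to all $\gamma$-sets and provide a cheap route to a canonical representative.

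For the jump bound, fix $\gamma$-sets $D_1,D_2$. A single jump changes $|D_1\triangle D_2|$ by at most $2$, so the distance in $\mathcal{J}(T,\gamma)$ is at least $|D_1\setminus D_2|$; it therefore suffices to realise a \emph{monotone} reconfiguration, in which each jump deletes a vertex of $D_1\setminus D_2$ and inserts one of $D_2\setminus D_1$, since such a walk has length exactly $|D_1\setminus D_2|$. I would produce it by induction on $|D_1\setminus D_2|$: root $T$, take $x\in D_1\setminus D_2$ of maximum depth, observe that below $x$ the sets $D_1$ and $D_2$ dominate identically outside a bounded neighbourhood of $x$, use this to locate $y\in D_2\setminus D_1$ with $(D_1\setminus\{x\})\cup\{y\}$ a $\gamma$-set, and recurse. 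Hence $\mathrm{diam}(\mathcal{J}(T,\gamma))=\max_{D_1,D_2}|D_1\setminus D_2|$, which by the first lemma is at most $\gamma(T)$ minus the number of vertices that every $\gamma$-set must contain --- the claimed bound.

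For the additive slide bound I would route $D_1\to F_1\to F_2\to D_2$ through full $\gamma$-sets $F_1,F_2$ supplied by the third lemma: the outer legs are controlled by the exchange lemma (one slide per mishandled weak support), while $F_1$ and $F_2$ agree on every support vertex and hence differ only among the $\gamma(T)-s'-s''$ ``interior'' tokens, each of which can be driven into place along a short path read off from the monotone jump sequence of the previous step. The delicate point is the accounting --- showing that the interior tokens cost at most two slides apiece and that the price of the outer legs is absorbed by savings in the interior, so that the total collapses to $2(\gamma(T)-s'')-s'$ --- which I would handle by an induction on $T$ that repeatedly deletes a leaf together with its support and tracks the effect on $\gamma(T)$, $s'$, and $s''$.

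The bound $\mathrm{diam}(\mathcal{S}(T,\gamma))\le 2(n-1)/3$ is, I expect, the main obstacle, precisely because it mentions neither $\gamma(T)$ nor the support counts and so cannot come out of the local exchange arguments above. I would take a geodesic $D_1=F_0,F_1,\dots,F_k=D_2$ in $\mathcal{S}(T,\gamma)$, note that the step $F_i\to F_{i+1}$ slides a token across an edge $e_i\in E(T)$, and aim for $k\le 2|E(T)|/3$. The strategy would be a charging argument showing that the multiset of edges used, with an average multiplicity of at least $3/2$, still embeds into $E(T)$: along a geodesic no slide is immediately undone, and in a tree distinct tokens occupy disjoint vertex sets and their trajectories interact only in a limited way, so each edge can be crossed only a bounded number of times. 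Making the charging scheme yield exactly the constant $2/3$, rather than a weaker one, is the crux; everything preceding it is bookkeeping resting on the three structural lemmas.
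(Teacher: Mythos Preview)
There is nothing in this paper to compare your proposal against: the theorem is quoted from Lema\'{n}ska and \.{Z}yli\'{n}ski~\cite{LZ20} as background and is not proved here. The paper's only original proof is the argument that $K_{2,3}$ is not the slide $\gamma$-graph of any bipartite graph; the cited diameter bounds are simply stated.

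That said, your sketch has a concrete slip in the jump-bound step. You argue that $\mathrm{diam}(\mathcal{J}(T,\gamma))\le \gamma(T)$ minus the number of vertices common to all $\gamma$-sets, and then identify this with ``the claimed bound''. But your first lemma shows that \emph{strong} supports lie in every $\gamma$-set, giving $\gamma(T)-s''$, whereas the stated bound is $\gamma(T)-s'$; weak supports need not lie in every $\gamma$-set (your own second lemma says so). In fact the bound $\gamma(T)-s'$ as printed cannot be right: for $P_4$ one has $\gamma=2$, $s'=2$, $s''=0$, yet $\mathcal{J}(P_4,\gamma)\cong C_4$ has diameter $2$. So either the paper has transcribed the Lema\'{n}ska--\.{Z}yli\'{n}ski result inaccurately, or additional hypotheses are in play; in any case your argument does not yield the inequality as stated. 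The slide and $2(n-1)/3$ portions of your plan are openly programmatic (``I would handle by an induction'', ``the strategy would be a charging argument''), and you correctly flag that obtaining the exact constant $2/3$ is where the real work lies; none of that can be assessed against this paper, which offers no argument for these bounds.
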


Mynhardt and Nasserasr~\cite{MN20} highlighted the following open questions:
\begin{enumerate}
\item \cite{FHHH11} Which graphs are $\gamma$-graphs of trees?
\item \cite{MT18} Is every bipartite graph the $\gamma$-graph of a {\bf bipartite} graph?
\end{enumerate}

An algorithm for finding the $\gamma$-graph of a tree and a simple characterization of {\bf trees} which are $\gamma$-graphs of trees were given by Finbow and van Bommel~\cite{FvB19}.  In what follows, we demonstrate that $K_{2, 3}$ is not the $\gamma$-graph of any bipartite graph, resolving the second question in the negative.

\section{Main Result}

We begin by presenting the following observations, noted by Finbow and van Bommel~\cite{FvB19} that will be used in the proof of the main result.

\begin{obs} \cite{FvB19} \label{spn}
If $D$ is a $\gamma$-set and if $u \in D$ is such that $pn(D, u) = \{u\}$, then for each $v \in N(u)$, we have that $D$ is adjacent to $D_v = (D - \{u\}) \cup \{v\}$.
\end{obs}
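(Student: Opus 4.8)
The plan is to verify directly that $D_v$ is a $\gamma$-set and that the pair $(u,v)$ witnesses the slide-adjacency condition between $D$ and $D_v$. First I would check that $v \notin D$. The hypothesis says $u$ is its own private neighbour and has no external private neighbour, so $u \in N[u] \setminus N[D - u]$; in particular no vertex of $D - u$ is equal or adjacent to $u$. If we had $v \in N(u) \cap D$, then $v \in D - u$ and $u \in N(v) \subseteq N[D - u]$, a contradiction. Hence $v \notin D$, and consequently $|D_v| = |D| - 1 + 1 = \gamma(G)$, so $D_v$ has the right cardinality.

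Next I would show $D_v$ dominates $G$. The key (and essentially only) computation is to rewrite the private-neighbour hypothesis as a statement about $N[D - u]$: since $N[D] = N[D - u] \cup N[u] = V(G)$, we get $V(G) \setminus N[D-u] = N[u] \setminus N[D-u] = pn(u, D) = \{u\}$. Now $v \in N(u)$ forces $u \in N[v]$, so
\[
N[D_v] = N[D - u] \cup N[v] \supseteq \bigl(V(G) \setminus \{u\}\bigr) \cup \{u\} = V(G),
\]
and therefore $D_v$ is a dominating set of cardinality $\gamma(G)$, i.e. a $\gamma$-set.

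Finally, I would note that $u \in D$, $v \in D_v$, $uv \in E(G)$, and $D - \{u\} = D_v - \{v\}$ (using $v \notin D$), which is exactly the adjacency condition defining $\mathcal{S}(G,\gamma)$; hence $D$ and $D_v$ are adjacent, as claimed. There is no real obstacle here: the whole argument is short, and the only place that needs a moment's care is the bookkeeping step identifying $V(G) \setminus N[D - u]$ with $pn(u, D)$, after which domination of $D_v$ and the adjacency both fall out immediately.
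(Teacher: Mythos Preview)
Your argument is correct. The paper does not actually prove this observation; it is quoted from~\cite{FvB19} and used as a black box, so there is no ``paper's proof'' to compare against. What you have written is exactly the natural verification: the hypothesis $pn(u,D)=\{u\}$ says simultaneously that $u$ has no neighbour in $D-\{u\}$ (giving $v\notin D$ and hence $|D_v|=\gamma(G)$) and that $N[D-\{u\}]=V(G)\setminus\{u\}$ (giving domination once $v\in N(u)$ is added), after which the slide-adjacency is immediate. There is nothing to add or fix.
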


\begin{obs} \cite{FvB19} \label{bipn}
If $D_1$, $D_2 = (D_1 - \{u\}) \cup \{v\}$ and $D_3 = (D_1 - \{u\}) \cup \{w\}$ with $v, w \in N(u)$ are three distinct $\gamma$-sets of a bipartite graph\footnote{Finbow and van Bommel~\cite{FvB19} make this observation for trees, but a similar argument provides the result for bipartite graphs in general.}, then $pn(D_1, u) = pn(D_2, v) = pn(D_3, w) = \{u\}$.
\end{obs}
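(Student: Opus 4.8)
The plan is to set $S = D_1 - \{u\} = D_2 - \{v\} = D_3 - \{w\}$, so that all three $\gamma$-sets share the common core $S$ and differ only in the single vertices $u$, $v$, $w$. Since $D_2$ and $D_3$ are distinct we have $v \neq w$, and since $v, w \in N(u)$ I will fix a bipartition of the graph in which $u$ lies in one part and $v, w$ lie in the other. The engine of the whole argument is the defining identity $pn(D_1, u) = N[u] - N[S]$ (and its analogues for $v$ and $w$), combined with two facts I intend to use repeatedly: first, because each of $D_1, D_2, D_3$ is a $\gamma$-set, every one of its vertices has a nonempty private neighbour set; second, because each is a dominating set, any vertex lying outside $N[S]$ must be dominated by whichever single vertex that set adjoins to $S$.

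First I would handle $pn(D_1, u)$. Any private neighbour $x$ of $u$ lies in $N[u] - N[S]$, so $x \notin N[S]$; applying that $D_2 = S \cup \{v\}$ and $D_3 = S \cup \{w\}$ each dominate $G$ then forces $x \in N[v]$ and $x \in N[w]$. Thus $x \in N[u] \cap N[v] \cap N[w]$. If $x \neq u$, then $x \in N(u)$ places $x$ in the same part as $v$ and $w$, and bipartiteness prevents $x$ from being adjacent to either $v$ or $w$; membership in $N[v] \cap N[w]$ therefore collapses to $x = v = w$, contradicting $v \neq w$. Hence $pn(D_1, u) \subseteq \{u\}$, and since it is nonempty it equals $\{u\}$. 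A by-product I would record for later use is that $u \notin N[S]$.

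The second and third equalities carry the interesting twist: I claim $pn(D_2, v) = \{u\}$ rather than $\{v\}$. The by-product $u \notin N[S]$ together with $u \in N[v]$ shows $u \in pn(D_2, v)$, giving nonemptiness. For the reverse inclusion, any $x \in pn(D_2, v)$ satisfies $x \in N[v]$ and $x \notin N[S]$, so the fact that $D_1$ and $D_3$ dominate forces $x \in N[u]$ and $x \in N[w]$. The candidate $x = v$ is excluded because $v$ and $w$ lie in the same part and are distinct, whence $v \notin N[w]$; and for $x \neq v$ the vertex $x$ lies in $N(v)$, hence in the same part as $u$, so $x \in N[u]$ forces $x = u$ by bipartiteness. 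Therefore $pn(D_2, v) = \{u\}$, and the identical argument with the roles of $v$ and $w$ interchanged yields $pn(D_3, w) = \{u\}$.

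The main conceptual obstacle is not any single calculation but recognising that the statement asserts all three sets equal $\{u\}$, so that $v$ and $w$ are \emph{not} their own private neighbours in $D_2$ and $D_3$. The parity argument must accordingly be arranged to rule out $x = v$ (respectively $x = w$) rather than to confirm it, and bipartiteness enters at exactly this point: it forces the two neighbours $v, w$ of $u$ into a single part, so they cannot dominate one another, which is precisely what makes the shared private neighbour unique and equal to $u$. The nonemptiness of each set, guaranteed by the $\gamma$-set property, then upgrades the containments $\subseteq \{u\}$ to equalities.
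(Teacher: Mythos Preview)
The paper does not supply its own proof of this observation; it cites \cite{FvB19} (where the statement is made for trees) and notes in a footnote that the same argument extends to bipartite graphs. Your proof is correct and is precisely the natural extension: bipartiteness is invoked only to ensure that two distinct neighbours $v,w$ of $u$ lie in the same part and hence can neither be adjacent to one another nor share a neighbour on that side, which is exactly what replaces the tree-specific reasoning while the domination and private-neighbour bookkeeping goes through unchanged.
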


We can now demonstrate our main result.

\begin{theorem}
$K_{2, 3}$ is not the $\gamma$-graph of any bipartite graph.
\end{theorem}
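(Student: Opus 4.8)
The plan is to suppose, for contradiction, that there is a bipartite graph $G$ with $\mathcal{S}(G, \gamma) \cong K_{2,3}$, and to analyze the five $\gamma$-sets of $G$ using the private-neighbour structure. Label the $\gamma$-sets so that $A_1, A_2$ form the part of size $2$ and $B_1, B_2, B_3$ form the part of size $3$, so that each $A_i$ is adjacent (in the $\gamma$-graph) to each $B_j$, and there are no other edges. First I would observe that, since $A_1$ is adjacent to the three distinct sets $B_1, B_2, B_3$, Observation~\ref{bipn} does not immediately apply (that observation concerns one set adjacent to two others via a \emph{common} removed vertex), so the first real task is to pin down, for each $\gamma$-graph edge $A_i B_j$, the vertex $u_{ij} \in A_i$ that is slid out and the vertex $w_{ij} \in B_j$ that is slid in, and to determine which of these coincide. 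Because $A_1$ has degree $3$ in $K_{2,3}$ while $|A_1| = \gamma(G)$, a counting/pigeonhole argument on how many distinct "exit vertices'' $A_1$ can use will be central: if $A_1$ reaches $B_1, B_2, B_3$ by sliding out the \emph{same} vertex $u$, then Observation~\ref{bipn} forces $pn(A_1, u) = \{u\}$ and forces the three entering vertices to be distinct neighbours of $u$; if instead $A_1$ uses two different exit vertices, a more delicate case analysis is needed.

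Next I would exploit non-adjacency. The crucial structural fact to extract is that $B_1, B_2, B_3$ are pairwise non-adjacent in the $\gamma$-graph and $A_1, A_2$ are non-adjacent in the $\gamma$-graph. Using Observation~\ref{spn} in the contrapositive direction: if some $u \in A_1$ has $pn(A_1, u) = \{u\}$, then \emph{every} neighbour $v$ of $u$ in $G$ yields a $\gamma$-set $(A_1 - u) \cup \{v\}$ adjacent to $A_1$; since $A_1$ has exactly three $\gamma$-graph neighbours, $u$ has a tightly controlled neighbourhood, and the sets $(A_1 - u)\cup\{v\}$ must be exactly $\{B_1, B_2, B_3\}$ (or a subset, if other neighbours of $A_1$ arise from a different exit vertex). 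I expect to show that in fact $A_1$ and $A_2$ must each use a single common exit vertex — call them $u$ and $u'$ — with $pn(A_1,u) = \{u\}$, $pn(A_2, u') = \{u'\}$, and that $A_1 - u = A_2 - u' =: C$, a set of size $\gamma(G) - 1$ that is "almost dominating.'' Then $B_1, B_2, B_3$ are precisely $C \cup \{v\}$ for the three common neighbours $v$ of $u$ and $u'$ (so $u$ and $u'$ have at least three common neighbours), and $A_1 = C \cup \{u\}$, $A_2 = C \cup \{u'\}$.

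From that description I would derive the contradiction by checking adjacency among the $B_j$. Since $B_1 = C \cup \{v_1\}$ and $B_2 = C \cup \{v_2\}$ differ in exactly the single pair of vertices $v_1, v_2$, the only way $B_1$ and $B_2$ could fail to be $\gamma$-graph-adjacent is if $v_1 \not\sim v_2$ in $G$ — which is automatic in a bipartite graph provided $v_1, v_2$ lie in the same part. So I need $v_1, v_2, v_3$ all in one part of the bipartition of $G$; but they are common neighbours of $u$ and $u'$, which forces $u, u'$ into the other part, hence $u \not\sim u'$, which is consistent. The genuine obstacle — and where the bipartite hypothesis must bite — is ruling out the remaining configurations: namely showing that $A_1$ and $A_2$ cannot reach the $B_j$'s by a mixture of exit vertices, and that no $B_j$ can be adjacent to another $B_k$ once all five sets are written in the form above; here I would argue that any alternative slide between two $B_j$'s would produce a $\gamma$-graph edge absent from $K_{2,3}$, and that any attempt to avoid the single-common-exit-vertex structure either creates an $A_1A_2$ edge or fails to produce three neighbours for $A_1$. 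I anticipate the main difficulty is the bookkeeping in this case analysis — proving that the "nice'' configuration ($A_1 = C\cup\{u\}$, $A_2 = C\cup\{u'\}$, $B_j = C \cup \{v_j\}$) is forced — after which the bipartite parity of $\{v_1, v_2, v_3\}$ versus $\{u, u'\}$ delivers the contradiction, because it makes all three pairs $v_jv_k$ non-edges, so the graph on $\{B_1,B_2,B_3\}$ induced in $\mathcal{S}(G,\gamma)$ is correct, yet simultaneously some required count (e.g.\ domination of $u$ and $u'$ by $C$, or the existence of private neighbours) fails.
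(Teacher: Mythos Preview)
Your outline has a genuine gap at its central step. You write that you ``expect to show that in fact $A_1$ and $A_2$ must each use a single common exit vertex $u$ and $u'$ with $A_1-u=A_2-u'=:C$,'' so that all five $\gamma$-sets have the form $C\cup\{*\}$. That expectation is not justified and, more importantly, it is not what actually happens in the cases one must rule out. Pigeonhole on the degree-$3$ vertex $A_1$ does give you \emph{one} exit vertex $u$ used for at least two of the $B_j$'s, and then Observation~\ref{bipn} yields $pn(A_1,u)=\{u\}$, so Observation~\ref{spn} bounds $\deg_G(u)\le 3$. But the case $\deg_G(u)=2$ --- where $A_1$ reaches the third $B_j$ via a \emph{different} exit vertex --- is exactly where the work lies, and you wave it off as ``a more delicate case analysis.'' In the paper's proof (which starts from a degree-$2$ vertex $Y_1$ instead), the configurations that must be eliminated have the five sets of the form $Y\cup\{*,*\}$ with \emph{two} varying coordinates, and one checks directly that the degree-$3$ vertex $X_1$ uses two distinct exit vertices. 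So the ``nice'' one-coordinate picture you aim for is never forced.

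Moreover, even within your nice configuration the endgame is left unstated. You do have a quick contradiction there --- since $pn(A_2,u')=\{u'\}$ the set $C=A_2-u'$ fails to dominate $u'$, yet $A_1=C\cup\{u\}$ must dominate $u'$, forcing $u\sim u'$ and hence an unwanted edge $A_1A_2$ in the $\gamma$-graph --- but this is the easy case; it does not touch the configurations above. The paper's argument instead splits on whether the degree-$2$ set $Y_1$ reaches $X_1,X_2$ via the same or different exit vertices, then brings in $Y_2$ to pin down the remaining labels; bipartiteness enters as triangle-freeness (both through Observation~\ref{bipn} and through an explicit triangle produced when a certain $4$-cycle appears), and the final subcase is killed by showing $Y_2=Y_3$. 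Your proposal would need to recover all of that case analysis before reaching any contradiction, and the step you identify as ``the main difficulty'' is not merely bookkeeping --- it is the whole proof.
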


\begin{proof}
Suppose $G$ is a bipartite graph and $G(\gamma) \cong K_{2, 3}$.  Let the $\gamma$-sets of $G$ be labelled $X_1, X_2, Y_1, Y_2, Y_3$ such that $\{X_1, X_2\} \cup \{Y_1, Y_2, Y_3\}$ is the bipartition of $K_{2,3}$.  Consider the $\gamma$-set $Y_1$.  Then $X_1 = (Y_1 \setminus \{a_1\}) \cup \{b_1\}$ and $X_2 = (Y_1 \setminus \{a_2\}) \cup \{b_2\}$, with $a_1 \sim b_1$ and $a_2 \sim b_2$.

Suppose $a_1 = a_2 = a$.  Then by Observation~\ref{bipn}, $pn(Y_1, a) = pn(X_1, b_1) = pn(X_2, b_2) = \{a\}$.  It follows by Observation~\ref{spn} that $\deg_G(a) = 2$.  We then need 
\[
Y_2 = (X_1 \setminus \{c_1\}) \cup \{d_1\} = (X_2 \setminus \{c_2\}) \cup \{d_2\} = (Y_1 \cup \{b_1, d_1\}) \setminus \{a, c_1\} = (Y_1 \cup \{b_2, d_2\}) \setminus \{a, c_2\}.
\]
We observe that $d_1 \neq a$, as otherwise $c_1 \in Y_1$ is adjacent to $a$, contradicting that $pn(Y_1, a) = \{a\}$.  By symmetry, $d_2 \neq a$, and by definition $a \neq b_1, b_2$.  Hence $a \notin Y_2$.  We also observe that $c_i \neq b_i$, as $b_i$ and $a$ have no common neighbours, and by definition $c_i \neq d_i$.  It therefore follows that $c_1 = c_2$, from which we see that $b_1 = d_2$ and $b_2 = d_1$.  It follows that there exists a set $Y$ such that $Y_1 = Y \cup \{a, c\}$, $Y_2 = Y \cup \{b, d\}$, $Y_3 = Y \cup \{e, f\}$, $X_1 = Y \cup \{b, c\}$, and $X_2 = Y \cup \{c, d\}$.  We then must have that $c \in \{e, f\}$, as otherwise $\{b, d\} = \{e, f\}$, which implies $Y_2 = Y_3$.  Without loss of generality, $f$ is a common neighbour of $b$ and $d$.  But then $Y_3$ contains no neighbour of $a$, contradicting that $Y_3$ is a $\gamma$-set.

Now suppose $b_1 = b_2 = b$.  We observe that $pn(Y_1, a_1) = \{a_1\}$ and $pn(Y_1, a_2) = \{a_2\}$.  But then $Y_1 \cup \{b\} \setminus \{a_1, a_2\}$ is a dominating set of smaller size, contradicting that $Y_1$ is a $\gamma$-set.

Finally, we may assume $a_1 \neq a_2$, $b_1 \neq b_2$.  Consider the $\gamma$-set $Y_2$.  Then $X_1 = (Y_2 \setminus \{c_1\}) \cup \{d_1\}$ and $X_2 = (Y_2 \setminus \{c_2\}) \cup \{d_2\}$.  By symmetry, we have that $c_1 \neq c_2$ and $d_1 \neq d_2$.  We observe that
\[
X_1 = (Y_1 \setminus \{a_1\}) \cup \{b_1\} = (Y_2 \setminus \{c_1\}) \cup \{d_1\}, \quad X_2 = (Y_1 \setminus \{a_2\}) \cup \{b_2\} = (Y_2 \setminus \{c_2\}) \cup \{d_2\},
\]
and hence we obtain that
\[
Y_2 = (Y_1 \setminus \{a_1, d_1\}) \cup \{b_1, c_1\} = (Y_1 \setminus \{a_2, d_2\}) \cup \{b_2, c_2\}.
\]
Suppose $a_1 = c_1$.  Then either $a_2 = c_2$ or $b_2 = d_2$.  Suppose $a_2 = c_2$, giving us
\[
Y_2 = (Y_1 \setminus \{d_1\}) \cup \{b_1\} = (Y_1 \setminus \{d_2\}) \cup \{b_2\}.
\]
It follows that $b_1 = b_2$, a contradiction.  Now suppose $b_2 = d_2$, giving us
\[
Y_2 = (Y_1 \setminus \{d_1\}) \cup \{b_1\} = (Y_1 \setminus \{a_2\}) \cup \{c_2\}.
\]
It follows that $d_1 = a_2$ and $b_1 = c_2$.  Hence, there exists a set $Y$ such that $Y_1 = Y \cup \{a_1, d_1\}$, $Y_2 = Y \cup \{a_1, b_1\}$, $X_1 = Y \cup \{b_1, d_1\}$, and $X_2 = Y \cup \{a_1, b_2\}$, so vertices $a_1, b_1, b_2, d_1$ induce a 4-cycle.  Now consider the sets $Z_1 = Y \cup \{b_1, b_2\}$ and $Z_2 = Y \cup \{b_2, d_2\}$.  It follows that $Z_1$ and $Z_2$ cannot both be $\gamma$-sets of $G$.  Assume without loss of generality that $Z_1$ is not a $\gamma$-set.  Then there exists a vertex $e$ such that $e \notin N[Z_1]$; in particular $e \nsim b_1$ and $e \nsim b_2$.  Now since $Y_2$ is a $\gamma$-set, we have $e \sim a_1$ and since $X_1$ is a $\gamma$-set, we have $e \sim d_1$.  But $a_1$ and $d_1$ are adjacent, so this contradicts that $G$ is bipartite.  Hence $a_1 \neq c_1$ and by symmetry, $a_2 \neq c_2$, $b_1 \neq d_1$, and $b_2 \neq d_2$.  

Thus, it follows that $c_1 = b_2$, $c_2 = b_1$, $d_1 = a_2$, and $d_2 = a_1$.  But then by symmetry, we must have that $Y_2$ and $Y_3$ are equal, a contradiction.  Hence $K_{2, 3}$ is not the $\gamma$-graph of any bipartite graph.
\end{proof}

We conclude by observing that the $\gamma$-graph of $C_4$ is $K_{2, 4}$, so $K_{2, 3}$ is not a forbidden (induced) subgraph for $\gamma$-graphs of bipartite graphs.  We leave open the question of characterizing which bipartite graphs are the $\gamma$-graph of a bipartite graph.

\bibliographystyle{plain}

\begin{thebibliography}{11}
\bibitem{CHH10} E. Connelly, S.T. Hedetniemi, and K.R. Hutson, \emph{A note on $\gamma$-graphs}, AKCE Intr. J. Graphs Comb., 8:23--31, 2010.

\bibitem{EMN18} M. Edwards, G. MacGillivray, and S. Nasserasr, \emph{Reconfiguring minimum dominating sets: the $\gamma$-graph of a tree}, Discuss. Math. Graph Theory, 38:703--716, 2018.

\bibitem{FvB19} S. Finbow and C.M. van Bommel, \emph{$\gamma$-Graphs of Trees}, Algorithms, 12 (8): 153, 2019.

\bibitem{FHHH11} G.H. Fricke, S.M. Hedetniemi, S.T. Hedetniemi, and K.R. Hutson, \emph{$\gamma$-graphs of graphs}, Discuss. Math. Graph Theory, 31:517--531, 2011.

\bibitem{LZ20} M. Lema\'{n}ska and P. \.{Z}yli\'{n}ski, \emph{Reconfiguring minimum dominating sets in trees}, J. Graph Algorithms Appl., 24:47--61, 2020.

\bibitem{MN20} C.M. Mynhardt and S. Nasserasr, \emph{Reconfiguration of colourings and dominating sets in graphs: a survey}, arXiv Preprints arXiv:2003.05956, 2020.

\bibitem{MT18} C.M. Mynhardt and L. Teshima, \emph{A note on some variations of the $\gamma$-graph}. J. Combin. Math. Combin. Comput., 104:217--230, 2018.

\bibitem{SS08} N. Sridharan and K. Subramanian, \emph{Trees and unicyclic graphs are $\gamma$-graphs}, J. Combin. Math. Combin. Comput., 69:231--236, 2009.
\end{thebibliography}

\end{document}